\documentclass[11pt]{amsart}
\title{Brill-Noether theory of curves on toric surfaces}
\author[Geoffrey Smith]{Geoffrey Smith}
\address{1 Huckleberry Rd., Hopkinton, MA, 01748}
\email{geoffrey.smith@yale.edu}
\usepackage{graphicx}
\usepackage{amsmath}
\usepackage{amsthm}
\usepackage{amsfonts}
\usepackage{comment}
\usepackage{amssymb}
\usepackage{eufrak}
\usepackage{mathtools}

\newtheorem{theorem}{Theorem}[section]
\newtheorem{lem}[theorem]{Lemma}
\newtheorem{cor}[theorem]{Corollary}
\newtheorem{prp}[theorem]{Proposition}
\newtheorem{proposition}[theorem]{Proposition}
\theoremstyle{remark}
\newtheorem{remark}{Remark}
\theoremstyle{definition}
\newtheorem{example}[theorem]{Example}
\newtheorem{definition}[theorem]{Definition}

\begin{document}
\maketitle
\begin{abstract}
A Laurent polynomial $f$ in two variables naturally describes a projective curve $C(f)$ on a toric surface. We show that if $C(f)$ is a smooth curve of genus at least 7, then $C(f)$ is not Brill-Noether general. To accomplish this, we classify all Newton polygons that admit such curves whose divisors all have nonnegative Brill-Noether number.
\end{abstract}

\section{Introduction and statement of results}
Let $C$ be a smooth projective curve of genus $g$ defined over a field $k$.
Brill-Noether theory is the study of certain special divisors on $C$ with unusually large complete linear systems, namely those divisors for which Riemann's inequality is strict. 
Let $D$ be a divisor on $C$ of degree $d$ and rank $r$. The \emph{Brill-Noether number} $\rho(g,d,r)$ associated to $D$ is then defined by the equation $\rho(g,d,r)=g-(r+1)(g-d+r)$. The Brill-Noether Theorem of Griffiths and Harris indicates that a general family of curves, the so-called \emph{Brill-Noether general} curves, have no divisors with negative Brill-Noether number. 

In this paper we examine a particular family of curves, namely the smooth projective curves defined by some Laurent polynomial $f\in k[x^{\pm 1},y^{\pm1}]$ in two variables on the toric surface associated to the Newton polygon of $f$. We determine that no such curve is Brill-Noether general if its genus is greater than six.  Let $f$ be a Laurent polynomial in two variables $x,y$, let $\Delta$ be the Newton polygon associated to $f$, $X_\Delta$ the toric surface associated to $\Delta$, and $V(f)$ the zero-set of $f$ in $(k^*)^2$. Then we will establish the following result:
\begin{theorem}\label{g<7}
Let $C$ be the closure of the image of $V(f)$ under the natural map $(k^*)^2\rightarrow X_\Delta$. If $C$ is smooth and has genus $g$ at least 7, then $C$ is not Brill-Noether general. Moreover, if $g\neq 10$, then there exists a divisor on $C$ of rank 1 with negative Brill-Noether number.
\end{theorem}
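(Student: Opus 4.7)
The approach converts the Brill-Noether problem into a combinatorial one about the Newton polygon $\Delta$. Two classical facts bridge the two: by Khovanskii's theorem, $g = \#(\mathrm{int}(\Delta) \cap \mathbb{Z}^2)$; and the gonality of $C$ is bounded above by the lattice width $\mathrm{lw}(\Delta)$, since after an $\mathrm{SL}_2(\mathbb{Z})$-change of variables placing the minimum width $w = \mathrm{lw}(\Delta)$ in the vertical direction, $f$ becomes a polynomial of degree $w$ in $y$ with Laurent coefficients in $x$, and projection to the $x$-axis gives a degree-$w$ map $C \to \mathbb{P}^1$. Since $\rho(g,d,1) = 2d - g - 2$, a rank-one divisor with negative Brill-Noether number exists as soon as $\mathrm{lw}(\Delta) < \lceil (g+2)/2 \rceil$.

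The strategy is therefore to classify all lattice polygons with $g \ge 7$ interior lattice points and $\mathrm{lw}(\Delta) \ge \lceil (g+2)/2 \rceil$. The central combinatorial tool is the peeling operation: if $\Delta^{(1)}$ denotes the convex hull of the interior lattice points of $\Delta$ (assuming $\mathrm{lw}(\Delta) \ge 2$), then for any primitive linear functional the interior lattice points lie at least one lattice level inside each of the two extreme levels of the boundary, so $\mathrm{lw}(\Delta^{(1)}) \le \mathrm{lw}(\Delta) - 2$. Iterating this, and combining with an elementary lower bound on the number of lattice points contained in a polygon of given width, yields a quadratic inequality of the shape $g \gtrsim \mathrm{lw}(\Delta)^2$. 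A careful analysis of the base cases of the recursion --- in particular narrow trapezoids whose interior degenerates to a segment, and a small number of sporadic triangles --- then pins down a short explicit list of candidate polygons with $\mathrm{lw}(\Delta)$ large relative to $g$. All such candidates turn out to have $g \le 6$, except for the dilated simplex $T_6 = \mathrm{conv}\{(0,0),(6,0),(0,6)\}$ (and possibly a small set of unimodular variants), each realizing $g = 10$.

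For every $\Delta$ not falling into the $g = 10$ exceptional family, the classification delivers a pencil of degree $d < \lceil (g+2)/2 \rceil$, establishing the moreover clause. For $\Delta = T_6$, the curve $C$ is a smooth plane sextic, and the planar embedding supplies a $g^2_6$ with $\rho(10,6,2) = -8 < 0$, so $C$ is not Brill-Noether general and the first part of the theorem follows. The principal technical obstacle is the combinatorial classification itself: setting up the peeling recursion is routine, but cleanly isolating the small-genus base cases and ensuring that no wide exceptional polygon slips through --- particularly in the range $g = 7, 8, 9, 11, 12$ where the margin between $\mathrm{lw}(\Delta)$ and $\lceil (g+2)/2 \rceil$ is only one or two --- is the technical heart of the argument, and is where I would expect the bulk of the casework to lie.
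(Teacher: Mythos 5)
Your overall strategy is the paper's (Khovanskii's genus formula, gonality bounded by lattice width via projection, then a combinatorial classification of wide polygons), but the combinatorial engine you describe is stated in the wrong direction and would not produce the bound you need. The inequality $\mathrm{lw}(\Delta^{(1)})\leq \mathrm{lw}(\Delta)-2$ is the easy direction; iterating it only bounds the number of ``onion layers'' from above and cannot yield $g\gtrsim \mathrm{lw}(\Delta)^2$, because what is needed is a \emph{lower} bound on the size of the interior hull in terms of $\mathrm{lw}(\Delta)$. The paper gets this from the reverse inequality $\mathrm{lw}(\Delta)\leq \mathrm{lw}(\Delta^{(1)})+2$ (valid except when $\Delta\cong\mathrm{conv}\{(0,0),(d,0),(0,d)\}$, where the constant is $3$), which is Theorem 4 of Castryck--Cools and is genuinely nontrivial, combined with the Fejes T\'oth--Makai bound $\mathrm{area}\geq\frac38\mathrm{lw}^2$ applied to $\Delta^{(1)}$ and Pick's theorem (giving $\mathrm{area}(\Delta^{(1)})\leq g-\frac52$). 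Your ``elementary lower bound on lattice points of a polygon of given width'' cannot be applied to $\Delta^{(1)}$ without exactly this hard direction. Moreover, even the correct quadratic inequality only disposes of $g>12$; in the range $7\leq g\leq 12$, $g\neq 10$, the paper does not argue asymptotically but cites exhaustive classification data (maximal lattice width $4,4,5,5,6$ for polygons with $7,8,9,11,12$ interior points), and you acknowledge this casework without carrying any of it out.

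More seriously, the genus-$10$ exceptional case is misidentified, and this affects both clauses of the theorem. A smooth plane sextic ($\Delta=T_6$) has gonality $5$, so it carries a $g^1_5$ with $\rho(10,5,1)=-2<0$; it is not an exception to the rank-one statement at all (your width test $\mathrm{lw}(T_6)=6$ is simply not sharp in the $d\Sigma$ case, where the gonality is $d-1$), and your $g^2_6$ with $\rho=-8$ is correct but unnecessary there. The true exception is a different genus-$10$ polygon (Figure \ref{g=10} of the paper): its smooth curves are complete intersections of two cubics in $\mathbb{P}^3$, have gonality $6$, and admit \emph{no} rank-one divisor of negative Brill--Noether number, so the first assertion cannot be proved for them by exhibiting a pencil. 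The paper handles this family through the Clifford dimension: if no rank-one divisor has $\rho<0$, the Recognition Theorem of Eisenbud--Lange--Martens--Schreyer forces Clifford dimension $3$, yielding a rank-$3$ divisor of degree at most $9$ with negative Brill--Noether number (concretely the hyperplane class, with $\rho(10,9,3)=-6$). This polygon, and the non-pencil argument it requires, are absent from your classification, so as written the proof that genus-$10$ curves are not Brill--Noether general does not go through.
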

\begin{remark}
This bounds of this theorem are sharp; we will exhibit examples of curves $C$ without these special divisors for each genus $g$ with $g \leq 6$ .
\end{remark}

In fact, we prove a stronger result, and classify the interior Newton polygons of such general curves up to transformations of the lattice with the following, more technical theorem.
\begin{theorem}\label{classification}
There is an $f$ and $C$ associated to a Newton polygon $\Delta$ with no divisors having negative Brill-Noether number $\rho$ if and only if the convex hull of the interior lattice points  of $\Delta$ is empty or transforms under some invertible lattice transformation to one of the 11 polygons in Figure \ref{g<7figs}. Furthermore, if the interior Newton polygon is not equivalent under a lattice transformation to any of these 11 and is also not equivalent to the polygons in Figure \ref{g=10} or Figure \ref{3sigma}, then the gonality of $C$ is less than $\frac{g}{2}+1$. 
\end{theorem}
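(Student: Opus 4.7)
The plan is to reduce the Brill-Noether condition to a combinatorial inequality on the interior polygon $\Delta^{(1)}$ and then carry out a finite classification. The key analytic input is the gonality bound that for a generic curve $C$ with interior polygon $\Delta^{(1)}$, $\operatorname{gon}(C) \le \operatorname{lw}(\Delta^{(1)}) + 2$, arising from a pencil obtained by projecting along a lattice-width-realizing direction. Since $\rho(g,k,1) = 2k-g-2$ is negative precisely when $k < g/2 + 1$, this produces a rank-$1$ divisor with $\rho<0$ whenever $g > 2\operatorname{lw}(\Delta^{(1)}) + 2$.  The gonality statement of the theorem therefore reduces to classifying all interior polygons with $g := |\Delta^{(1)} \cap \mathbb{Z}^2| \le 2\operatorname{lw}(\Delta^{(1)}) + 2$, and verifying that these are exactly the union of Figures \ref{g<7figs}, \ref{g=10}, and \ref{3sigma} up to lattice equivalence.

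For the classification, I would stratify by $w := \operatorname{lw}(\Delta^{(1)})$.  For each fixed $w$ the polygon lies (up to $\operatorname{GL}_2(\mathbb{Z})$) in a horizontal strip of height $w$, and the inequality $g \le 2w+2$ bounds its horizontal extent, giving only finitely many candidates per $w$.  I would enumerate these directly for small $w$ (up to about $w=4$) using standard classifications of lattice polygons with few lattice points, and assign each candidate to its figure.  The forward direction of the iff then follows: polygons in Figure \ref{g<7figs} admit BN general curves, while those in Figures \ref{g=10} and \ref{3sigma} have non-negative $\rho$ for all rank-$1$ divisors but still carry a divisor of rank $\ge 2$ with $\rho < 0$ (for instance, a smooth plane sextic of genus $10$ has a $g^2_6$ with $\rho = -8$).

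For the backward direction, for each of the 11 polygons in Figure \ref{g<7figs} I would exhibit an explicit Laurent polynomial $f$ and verify Brill-Noether generality either by recognizing $C$ as a classical BN general curve of genus $\le 6$ (a trigonal curve, a smooth plane quartic, a $(2,3)$-complete intersection, etc.)\ or by a tropical degeneration argument using the skeleton associated to $\Delta^{(1)}$ in the spirit of Baker's specialization lemma.  The main obstacle is the intricate finite case analysis: making the enumeration exhaustive up to $\operatorname{GL}_2(\mathbb{Z})$-equivalence is delicate, and near the boundary of the inequality $g = 2w+2$ the bare gonality bound $\operatorname{lw}(\Delta^{(1)}) + 2$ may fail to be sharp, potentially requiring the full Castryck-Cools formula for the exact gonality of a generic toric curve to decide whether a given borderline polygon actually produces a BN-negative pencil.
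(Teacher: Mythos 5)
Your overall strategy is the same as the paper's (lattice width bounds gonality, gonality bounds force rank-1 divisors with $\rho<0$, a finite classification of the surviving interior polygons, and separate treatment of the two exceptional polygons via higher-rank divisors), but two steps have genuine gaps. First, your enumeration "stratify by $w=\mathrm{lw}(\Delta^{(1)})$ and go up to about $w=4$" has no termination argument: nothing in your proposal explains why large $w$ cannot satisfy $g\le 2w+2$. The paper supplies exactly this input via Theorem \ref{blackbox} (area of a convex lattice polygon is at least $\frac38\mathrm{lw}^2$) combined with Pick's theorem, which gives $\frac38 w^2\le g-\frac52$ and hence $w\le 5$, $g\le 12$ (Theorem \ref{bound}, Corollary \ref{largeg}); the remaining range $7\le g\le 12$ is then settled not by hand enumeration but by the published tables of Castryck and Castryck--Cools (Proposition \ref{smallg} and Proposition \ref{10}). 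Without some such area-versus-width inequality, your case analysis is not known to be finite, and even granting it, the cutoff is $w\le 5$, not $4$. A related small gap: your inequality $\mathrm{gon}(C)\le\mathrm{lw}(\Delta^{(1)})+2$ does not follow from projecting along a width direction alone, since by Theorem \ref{intlem} one has $\mathrm{lw}(\Delta)=\mathrm{lw}(\Delta^{(1)})+3$ when $\Delta$ is a standard triangle; that case must be treated separately (there $C$ is a smooth plane curve and projection from a point of $C$ gives degree $d-1$), as the paper does.

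Second, your disposal of the exceptional genus-10 polygon is based on a misidentification. The polygon of Figure \ref{g=10} does not correspond to plane sextics: plane sextics have the triangular interior polygon of lattice width 3, gonality $5<\frac{g}{2}+1$, and are already eliminated by the rank-1 argument. The curves attached to Figure \ref{g=10} are smooth $(3,3)$ complete intersections in $\mathbb{P}^3$, which have gonality $6$ and Clifford index $3$, so in particular they carry no $g^2_6$; your proposed witness divisor does not exist on them. One must instead produce a BN-negative divisor of rank $3$ --- the paper does this by invoking the Recognition Theorem of \cite{ELMS89} to get Clifford dimension 3 and a rank-3 divisor of degree at most 9 with $\rho<0$ (concretely, the hyperplane series $g^3_9$ has $\rho=-6$). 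Finally, for the existence direction your plan ("recognize classical BN-general curves or degenerate tropically") leaves implicit the part that actually requires work for the genus 5 and 6 polygons: besides gonality $\lceil\frac{g+1}{2}\rceil$, one must exclude rank-2 divisors with $\rho<0$ (i.e.\ plane-quintic behavior). The paper reduces to ranks $\le 2$ by Riemann--Roch and then quotes the Castryck--Cools gonality and Clifford-index computations; your outline should state explicitly how the rank-2 check is to be done.
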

\begin{figure}[t]
\centering
\includegraphics[ clip]{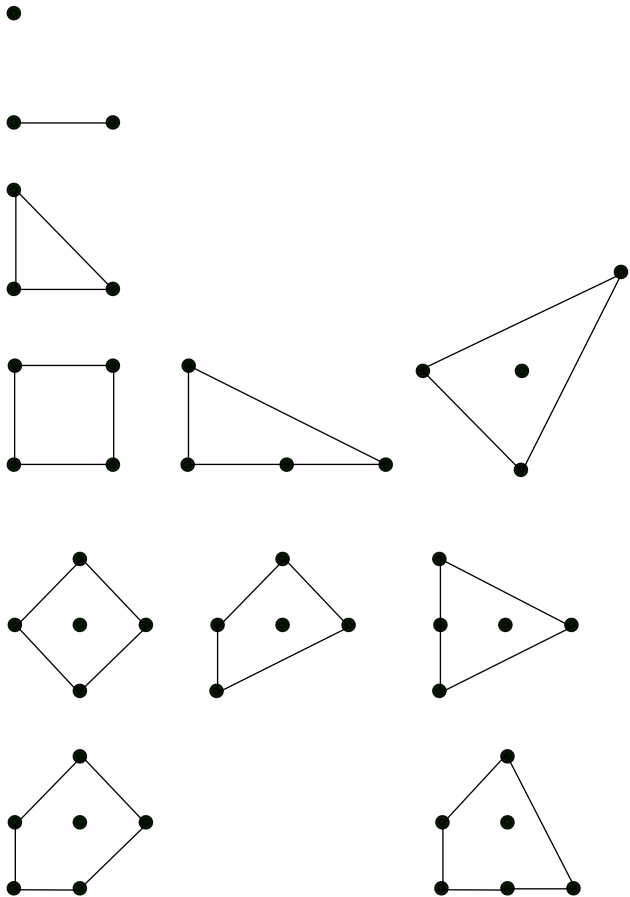}
\caption{Interior Newton polygons of Brill-Noether general curves}
\label{g<7figs}
\end{figure}
\begin{figure}[t]
\centering
\includegraphics[ clip]{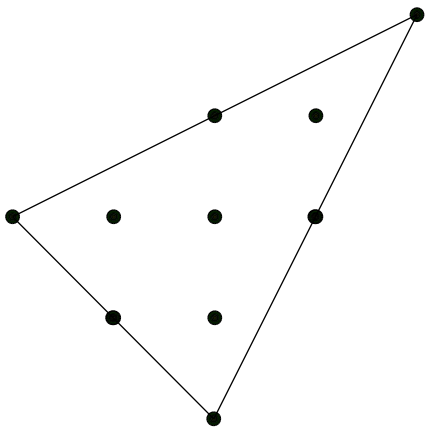}
\caption{Interior lattice polygon associated to some genus 10 curves of gonality 6}
\label{g=10}
\end{figure}
\begin{figure}[t]
\centering
\scalebox{.5}{\includegraphics[clip]{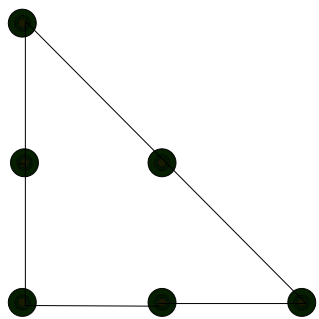}}
\caption{Six point interior polygon associated to no Brill-Noether general curves}
\label{3sigma}
\end{figure}
This paper is organized as follows. In Section 2 we introduce the  concepts and terminology from algebraic geometry and discrete geometry that will underlie the results in this paper. In Section 3 we use a bound on the lattice width of a curve's Newton polygon to establish the non-existence of curves without special divisors in the case $g>7$ and limit the set of possible interior Newton polygons to those of Figure \ref{g<7figs}. We also handle the exceptional genus 10 case in Proposition \ref{10}. In Section 4 we will then establish the existence of curves having no divisors with $\rho<0$ in the eleven cases indicated by Theorem \ref{classification}.

\section{Preliminaries}
Let $C$ be a smooth projective curve of genus $g$ defined over a field $k$. We refer to the dimension of the complete linear system of a divisor $D$ as the \emph{rank} of $D$. The Brill-Noether number of $D$, given by $\rho=g-(r+1)(g-d+r)$, then gives an expected dimension of the subscheme $W_d^r(C)$ of the Picard group of $C$ consisting of all degree $d$ divisor classes of rank at least $r$. 

The foundational result of Brill-Noether theory, the Brill-Noether Theorem of Griffiths and Harris, indicates that this expected dimension is correct for a general curve. We present a simplified version.
\begin{theorem}[\cite{GH80}]
Let $C$ be a smooth projective curve of genus $g$. Then the following statements are true:
\begin{enumerate}
\item If $\rho(g,d,r)\geq 0$, then there exists a divisor on $C$ of degree $d$ and rank $r$.
\item If $C$ is general and $\rho(g,d,r)<0$, then no divisor of degree $d$ on $C$ has rank $r$.
\end{enumerate}
\end{theorem}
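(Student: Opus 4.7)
The two parts require fundamentally different techniques, so the plan is to treat them separately following the classical Kempf--Kleiman--Laksov and Griffiths--Harris strategies.

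For part (1), the standard approach is to realize $W^r_d(C)$ as a determinantal subvariety of $\mathrm{Pic}^d(C)$. First I would fix a Poincar\'e line bundle $\mathcal{L}$ on $C\times \mathrm{Pic}^d(C)$ and an effective divisor $E$ on $C$ of sufficiently large degree, then push forward the short exact sequence $0\to\mathcal{L}\to\mathcal{L}(E)\to\mathcal{L}(E)/\mathcal{L}\to 0$ along the second projection to obtain a morphism $\phi$ of vector bundles on $\mathrm{Pic}^d(C)$. Scheme-theoretically, $W^r_d(C)$ is the locus where $\phi$ drops rank by at least $r+1$, and its expected codimension in $\mathrm{Pic}^d(C)$ is $(r+1)(g-d+r)$, giving expected dimension $\rho$. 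Nonemptiness when $\rho \geq 0$ then follows from the general existence theorem for degeneracy loci on complete varieties, or equivalently from computing the Porteous class via the Poincar\'e formula and observing it is nonzero.

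For part (2), the approach is degeneration. I would consider a one-parameter family $\pi\colon \mathcal{C}\to B$ whose general fiber is smooth of genus $g$ and whose special fiber $C_0$ is a chain of $g$ elliptic curves attached at sufficiently general points, and suppose for contradiction that the general fiber carries a $g^r_d$ with $\rho < 0$. Using Eisenbud--Harris limit linear series theory, after a base change this family degenerates to a refined limit $g^r_d$ on $C_0$: a compatible choice of $g^r_d$ on each elliptic component whose vanishing sequences at the two nodes satisfy the compatibility inequalities $a^{(i)}_j + a^{(i+1)}_{r-j} \geq d$ for every $j$. On each elliptic component, genericity of the attaching points forces the vanishing sequence to be essentially determined, and summing the compatibility inequalities along the chain yields a lower bound that contradicts $\rho < 0$.

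The main obstacle is part (2), and specifically the construction and control of limit linear series: one must show that the generic $g^r_d$ actually extends (after suitable base change) to a flat family whose central fiber satisfies the axioms of a refined limit linear series, and then extract from the combinatorics of vanishing sequences on a chain of elliptic curves that $\rho \geq 0$ must hold. Part (1), while technical, essentially reduces to a clean intersection-theoretic calculation once the determinantal presentation of $W^r_d(C)$ is in place; the real depth of Brill--Noether theory lies in the non-existence statement in (2).
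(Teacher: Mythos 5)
The paper does not prove this statement at all: it is the classical Brill--Noether Theorem, quoted verbatim as a black box from Griffiths--Harris \cite{GH80} (with the existence half going back to Kempf and Kleiman--Laksov), so there is no internal argument to compare yours against. Judged on its own terms, your outline follows the standard route and the division of labor is right: part (1) via the determinantal description of $W^r_d(C)$ in $\mathrm{Pic}^d(C)$, expected codimension $(r+1)(g-d+r)$, and nonvanishing of the degeneracy-locus (Porteous) class when $\rho\geq 0$; part (2) via degeneration. Two caveats are worth flagging. First, what you describe for (2) is the Eisenbud--Harris limit-linear-series proof on a chain of $g$ elliptic curves, not Griffiths--Harris's original argument, which degenerates instead to an irreducible rational curve with $g$ nodes (or cusps) and runs a Schubert-calculus transversality analysis; either is legitimate, but the attribution should match, and the tropical proof of Cools--Draisma--Payne--Robeva, which this paper cites as \cite{CDPR12}, is yet a third route using a chain of loops.

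Second, your plan names but does not supply the genuinely hard content. In (2) the three steps you list as ``obstacles'' are each substantial theorems: that a $g^r_d$ on the generic fiber extends after base change to a limit linear series on the special fiber (properness and dimension theory of limit linear series), that on an elliptic component with general attaching points the vanishing sequences are constrained so that each aspect contributes a nonnegative adjusted Brill--Noether number (this uses that the difference of the two attaching points is non-torsion in the Jacobian of the elliptic curve), and that these local contributions sum to force $\rho\geq 0$ globally. In (1), the existence theorem for degeneracy loci produces a divisor of degree $d$ and rank \emph{at least} $r$; passing to the statement as phrased requires the small additional observation that one may then subtract points or restrict to a component whose general member has rank exactly $r$. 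None of this makes your strategy wrong --- it is the accepted proof architecture --- but as written it is a plan for a proof of a deep theorem rather than a proof, which is consistent with the paper's own decision to cite \cite{GH80} rather than reprove it.
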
 

From this theorem follows immediately that on a general curve no divisor has negative Brill-Noether number. Despite this, many curves that are easy to describe, such as complete intersections in projective space, tend to have divisors with negative Brill-Noether number, and constructing particular curves of high genus having no such divisors requires surprising effort. Recently, there has been a proof using tropical methods by Cools et al. in \cite{CDPR12}.

\subsection{Lattice polygons and toric surfaces}
A \emph{lattice polygon} is defined to be a polygon in $\mathbb{R}^2$ whose vertices have integer coordinates. If $\Delta$ is a convex lattice polygon, then, following \cite{CC12}, we define the \emph{interior polygon} $\Delta^{(1)}$ of $\Delta$ as the convex hull of the set of lattice points contained in the interior of $\Delta$.

A \emph{lattice transformation} is an invertible linear map $f:\mathbb{Z}^2\rightarrow \mathbb{Z}^2$. They can be thought of as those transformations in $\mathrm{SL}_2(\mathbb{Z})$. This notion allows us to define the \emph{lattice width} of a polygon $\Delta$  robustly, as follows.
\begin{definition}
The \emph{lattice width} of a lattice polygon $\Delta$ is the minimal width of the image of $\Delta$ under a nontrivial linear map $\mathbb{Z}^2\rightarrow \mathbb{Z}$. 
\end{definition}
Lattice polygons in general are easy to analyze geometrically. One important result on general lattice polygons, both in general and in proving our main results, is the classical Pick's theorem:
\begin{theorem}\label{Pick's}
 If $\Delta$ is a lattice polygon, the area of $\Delta$ is given by $\frac{E}{2}+I-1$, where $E$ is the number of lattice points on the boundary of the polygon and $I$ is the number of points in the interior of $\Delta$. 
\end{theorem}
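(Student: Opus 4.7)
The plan is to prove Pick's theorem via a two-step reduction: first establish additivity of the formula under gluing, then verify it for \emph{elementary} lattice triangles, meaning those whose only lattice points are their three vertices.

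First, I would prove the additivity statement: if a lattice polygon $\Delta$ decomposes as $\Delta_1 \cup \Delta_2$ with disjoint interiors meeting along a polygonal arc $L$ containing $\ell$ lattice points, and Pick's formula holds for each $\Delta_i$, then it holds for $\Delta$. This is elementary bookkeeping: the $\ell - 2$ non-endpoint lattice points of $L$ become interior to $\Delta$ while remaining boundary points of each $\Delta_i$, and the two endpoints of $L$ remain boundary points of $\Delta$. This yields the relations $E(\Delta) = E(\Delta_1) + E(\Delta_2) - 2\ell + 2$ and $I(\Delta) = I(\Delta_1) + I(\Delta_2) + \ell - 2$, and adding the two instances of Pick's formula for the pieces gives the formula for $\Delta$.

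Next I would reduce to the elementary case. Every lattice polygon admits a triangulation into lattice triangles (for instance by drawing diagonals from a fixed vertex), and any non-elementary lattice triangle can be further subdivided by joining an extra lattice point on an edge or in the interior to the triangle's vertices. This process terminates because the lattice-point count in each piece strictly decreases at every step, so iterated additivity reduces the problem to the case of an elementary triangle, which has $E = 3$ and $I = 0$ and for which Pick's formula predicts area $\tfrac12$.

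The main obstacle is verifying this area computation. I would show that every elementary triangle is carried to the standard triangle with vertices $(0,0)$, $(1,0)$, $(0,1)$ by an affine lattice transformation. Fixing a vertex, the two edge vectors $\vec a,\vec b$ emanating from it are each primitive by elementarity, and I claim $|\det(\vec a,\vec b)| = 1$: if instead they generated a sublattice of index $k \geq 2$, a nontrivial coset representative would lie in the half-open parallelogram they span, and the central symmetry of this parallelogram would then force a lattice point inside the triangle or on one of its edges, contradicting elementarity. Once $|\det(\vec a,\vec b)| = 1$, the vectors form a basis of $\mathbb{Z}^2$, an $\mathrm{SL}_2(\mathbb{Z})$-transformation carries the triangle to the standard one, and the standard area formula yields $\tfrac12$, completing the proof.
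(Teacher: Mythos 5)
The paper does not prove this statement at all: Pick's theorem is quoted as a classical result and used as a black box, so there is no internal proof to compare against. Your proposal is a correct, self-contained version of the standard dissection proof, and all the key steps check out: the bookkeeping identities $E(\Delta)=E(\Delta_1)+E(\Delta_2)-2\ell+2$ and $I(\Delta)=I(\Delta_1)+I(\Delta_2)+\ell-2$ do make the formula additive under gluing along an arc; the subdivision process terminates since each sub-triangle omits at least one vertex of its parent and hence has strictly fewer lattice points; and the determinant argument is sound, since a nonzero lattice point $s\vec a+t\vec b$ with $0\le s,t<1$ either lies in the closed triangle or is carried into it (minus the vertices) by the symmetry $p\mapsto \vec a+\vec b-p$, contradicting elementarity, so $\lvert\det(\vec a,\vec b)\rvert=1$ and the triangle has area $\tfrac12\lvert\det(\vec a,\vec b)\rvert=\tfrac12$. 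Two small points of care: fanning diagonals from a fixed vertex triangulates only convex (or star-shaped) polygons, so either restrict to convex $\Delta$ --- which is all the paper ever uses --- or invoke the standard fact that every simple polygon admits a triangulation by diagonals; and when gluing the triangles back together you should add them in an order (e.g.\ around the fan) in which each new piece meets the previous union along a single polygonal arc, so that your additivity lemma applies at every step. With those remarks your argument is complete, and it trades the paper's appeal to a classical citation for a short elementary proof.
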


More recent results on lattice polygons include the following bound on the area of a convex lattice polygon in terms of its lattice width.
\begin{theorem}[\cite{TM74}, Theorem 2]\label{blackbox}
If $\Delta$ is a convex lattice polygon, then the area of $\Delta$ is at least $\frac{3}{8}\mathrm{lw}(\Delta)^2$.
\end{theorem}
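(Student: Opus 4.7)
The plan is a normalization-plus-optimization argument. Since both area and lattice width are preserved under $\mathrm{SL}_2(\mathbb{Z})$ transformations and translations, I would first apply a lattice transformation realizing $w := \mathrm{lw}(\Delta)$ as the width of the functional $(x, y) \mapsto y$, and then translate so that $\Delta \subseteq \mathbb{R} \times [0, w]$ meets both bounding horizontal lines. Parametrize $\Delta$ by slices: for each $t \in [0, w]$, set $\Delta \cap \{y = t\} = [a(t), b(t)]$, so that $a$ is convex, $b$ is concave, and $\ell(t) := b(t) - a(t)$ is a nonnegative concave function with $\mathrm{area}(\Delta) = \int_0^w \ell(t)\,dt$.

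The lattice-width hypothesis unpacks, after this normalization, into a family of linear constraints on $(a, b)$. Applied to the sheared functional $(x, y) \mapsto x - my$ for each $m \in \mathbb{Z}$, it reads
\begin{equation*}
\max_{t \in [0, w]} \bigl(b(t) - mt\bigr) - \min_{t \in [0, w]} \bigl(a(t) - mt\bigr) \geq w.
\end{equation*}
The ``wide'' case $\max_t \ell(t) \geq w$ is immediate: concavity of $\ell$ yields $\int_0^w \ell \geq \tfrac{1}{2} w \max_t \ell(t) \geq \tfrac{1}{2}w^2 > \tfrac{3}{8}w^2$. The interesting case is $\max_t \ell(t) < w$, where only the nonzero shears $m \neq 0$ can force the area to be large; they push $b$ to the right and $a$ to the left substantially as $t$ varies, preventing the profile from being too thin on average.

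From here I would carry out an extremal-polygon reduction: at any area-minimizing $\Delta$, the vertices must sit at points where several of these constraints are simultaneously active, limiting the minimizer to a small family, expected to consist of triangles with one vertex on $y = 0$, one on $y = w$, and one at an intermediate height. Equality is attained already at $w = 2$ by the triangle with vertices $(0, 0), (1, 2), (-1, 1)$, whose area is $3/2 = \tfrac{3}{8}w^2$. The hard part is precisely this reduction and the subsequent explicit minimization: one must enumerate the extremal configurations for general $w$ and verify in each case that the constant $\tfrac{3}{8}$ holds, which reduces to minimizing $\tfrac{1}{2}|pr - qw|$ over integer triples $(p, q, r)$ with $0 < r < w$ subject to the shear-width constraints. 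A case analysis on the signs and sizes of $p$ and $q$ should suffice, but extracting the tight constant is the main technical content of the theorem.
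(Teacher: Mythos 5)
This statement is not proved in the paper at all: it is quoted as a black box from \cite{TM74} (Fejes T\'oth--Makai), so there is no internal argument to compare yours against. Judged on its own, your proposal is a plan rather than a proof. The normalization (putting the width direction vertical, slicing, concavity of $\ell$) and the ``wide'' case $\max_t \ell(t)\geq w$ are fine, but everything that makes the theorem true with the constant $\tfrac{3}{8}$ --- the existence of an area-minimizer, the reduction of the minimizer to a small family of triangles, and the explicit minimization --- is asserted and explicitly deferred (``the hard part is precisely this reduction''). That deferred step is the entire content of the theorem, so the proposal has a genuine gap rather than a checkable argument.

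Two specific soft spots in the plan itself. First, the variational reasoning ``at an area-minimizing $\Delta$ several constraints must be simultaneously active at the vertices'' does not apply verbatim: if you minimize over lattice polygons the family is discrete, so there is no continuous perturbation to exploit (and compactness/existence of a minimizer with fixed lattice width needs an argument), while if you minimize over all convex bodies of lattice width $w$ --- which is the setting of \cite{TM74} --- the extremal triangle need not have lattice vertices, so your final reduction to minimizing $\tfrac{1}{2}\lvert pr-qw\rvert$ over integer triples $(p,q,r)$ is not justified. Second, you only impose the width constraints coming from the functionals $y$ and $x-my$, $m\in\mathbb{Z}$; using a subset of the constraints is legitimate for a lower bound, but you would then have to verify that this subset alone already forces area $\geq \tfrac{3}{8}w^2$, which is not obvious and is nowhere checked. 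Your equality example $(0,0)$, $(1,2)$, $(-1,1)$ of area $\tfrac{3}{2}$ at $w=2$ is a good sanity check and matches the known extremal triangles, but a correct completion would essentially have to reproduce the Fejes T\'oth--Makai argument; citing that paper, as the author does, is the honest resolution here.
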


Also, the lattice width of a lattice polygon's interior polygon determines its own lattice width. 
\begin{theorem}[\cite{CC12}, Theorem 4]\label{intlem}
The lattice width of $\Delta$ is given by 
\begin{equation*}
\mathrm{lw}(\Delta)=\mathrm{lw}(\Delta^{(1)})+2
\end{equation*}
unless $\Delta$ is equivalent under lattice transformation to the convex hull of $(0,0)$, $(0,d)$ and $(d,0)$ for some $d$, in which case $\mathrm{lw}(\Delta)=\mathrm{lw}(\Delta^{(1)})+3$.
\end{theorem}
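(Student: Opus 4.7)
The plan is to establish equality by proving two matching inequalities, with the standard simplex emerging naturally as the unique obstruction to a uniform bound. The easy direction gives $\mathrm{lw}(\Delta^{(1)}) \leq \mathrm{lw}(\Delta) - 2$. Fix a linear functional $\ell : \mathbb{Z}^2 \to \mathbb{Z}$ achieving the minimum width $w := \mathrm{lw}(\Delta)$, so that $\ell(\Delta) \subseteq [a,a+w]$ for some integer $a$. The level sets $\{\ell = a\}$ and $\{\ell = a+w\}$ meet $\Delta$ only on its boundary, hence every interior lattice point $p$ satisfies the strict inequality $a < \ell(p) < a+w$. Since $\ell(p)$ is an integer, $\ell(p) \in [a+1, a+w-1]$, giving the claim.

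For the harder direction I instead fix a functional $\ell'$ achieving the width $w' := \mathrm{lw}(\Delta^{(1)})$ on $\Delta^{(1)}$ with $\ell'(\Delta^{(1)}) \subseteq [b, b+w']$, and study how far $\ell'$ extends on $\Delta$. Write $\ell'(\Delta) \subseteq [B, B+W]$, where $B \leq b$ and $B + W \geq b + w'$; the target is $W \leq w'+2$ outside the simplex case. I analyze the $\ell'$-minimal and $\ell'$-maximal faces of $\Delta$ separately. When such a face is an edge, translating one lattice step inward normally lands on a point of $\Delta^{(1)}$, forcing $B \geq b-1$ (and symmetrically at the top). When the face is a vertex $v$, the two edges of $\Delta$ meeting at $v$ are used to locate a nearby interior lattice point: writing the two primitive edge vectors at $v$ and applying an $\mathrm{SL}_2(\mathbb{Z})$-change of coordinates reduces to a finite list of ``corner shapes,'' in all but one of which a lattice point one $\ell'$-step inside $v$ already lies in $\Delta^{(1)}$. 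The obstructed corner type turns out to have primitive edge vectors forming a unimodular basis, and when this type occurs simultaneously at both extremal vertices, global convexity together with Theorem \ref{Pick's} forces $\Delta$ to be lattice-equivalent to $\mathrm{conv}\{(0,0),(d,0),(0,d)\}$. A direct computation in that case confirms $W = w' + 3$.

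The main obstacle is controlling the vertex-by-vertex analysis so that the exception is genuinely unique. The subtlety is that while it is easy to produce an interior lattice point ``near'' any long edge or generic vertex, at a simplex corner the only candidate for an interior point one slice inward is itself a vertex of a smaller simplex $\Delta^{(1)}$, which shifts $b$ down by one rather than zero and accounts for the extra $+1$. Once this structural dichotomy is in place, combining the two inequalities, with the simplex computation handling equality in the $+3$ case, gives the theorem.
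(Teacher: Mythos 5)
This statement is quoted from \cite{CC12} as a black box, so the paper contains no proof to compare against; I am assessing your argument on its own terms. Your easy direction is correct: any width-minimizing functional for $\Delta$ takes values on interior lattice points strictly between its extremes on $\Delta$, so $\mathrm{lw}(\Delta^{(1)})\leq \mathrm{lw}(\Delta)-2$ (granting $\Delta^{(1)}$ nonempty).

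The hard direction, however, has a genuine gap: the intermediate claim you are trying to prove is false. You fix an \emph{arbitrary} functional $\ell'$ realizing $w'=\mathrm{lw}(\Delta^{(1)})$ and aim to show $\ell'$ has width at most $w'+2$ on $\Delta$ unless $\Delta$ is the standard simplex. Take $\Delta=\mathrm{conv}\{(0,0),(4,0),(1,3),(0,3)\}$, whose interior lattice points are $(1,1),(2,1),(1,2)$, so $\Delta^{(1)}$ is a unit triangle with $w'=1$. The functional $\ell'(u,v)=u$ realizes $w'=1$ on $\Delta^{(1)}$, yet has width $4=w'+3$ on $\Delta$, and $\Delta$ is a quadrilateral, not equivalent to any $\mathrm{conv}\{(0,0),(d,0),(0,d)\}$. (The theorem survives because the \emph{other} optimal direction $(u,v)\mapsto v$ gives width $3$ on $\Delta$.) So the hard direction cannot be a statement about every optimal direction of $\Delta^{(1)}$; one must prove that \emph{some} direction works, which forces a global argument --- in \cite{CC12} this goes through the Haase--Schicho result that $\Delta$ is contained in the maximal polygon $\Delta^{(1)(-1)}$ obtained by pushing each edge of $\Delta^{(1)}$ outward by lattice distance one, together with separate treatment of the degenerate cases where $\Delta^{(1)}$ is empty, a point, or a segment (for which your fixed-$\ell'$ scheme also breaks, e.g.\ $\Delta=\mathrm{conv}\{(0,0),(4,0),(0,2)\}$ with $\Delta^{(1)}$ a single point). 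Two further soft spots: vertex cones of lattice polygons do not reduce to a finite list of shapes under $\mathrm{SL}_2(\mathbb{Z})$ (they are classified by Hirzebruch--Jung data), a lattice point near a vertex need not be interior to $\Delta$ without global information, and unimodular corners at both extremal vertices do not force $\Delta$ to be a simplex --- the square $[0,d]^2$ has unimodular corners everywhere and satisfies the $+2$ case.
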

\begin{figure}[t]
\centering
\includegraphics[scale=.5, clip]{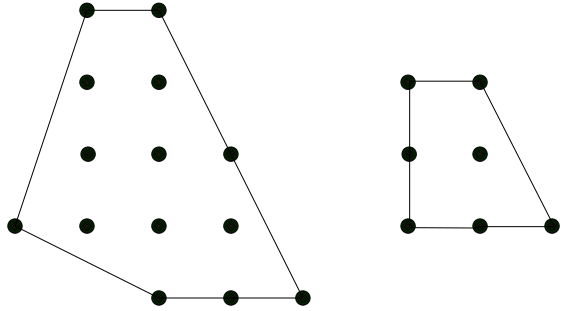}
\caption{A lattice polygon and its interior polygon. The former has lattice width 4, and the latter has lattice width 2.}
\label{InteriorEx}
\end{figure}

It is possible to associate to any convex lattice polygon a projective surface using the following construction, described in \cite{F93}:
\begin{definition}
If $\Delta$ is a convex lattice polygon containing $N$ lattice points, the \emph{toric surface associated to $\Delta$}, denoted $X_\Delta$, is the Zariski closure in $\mathbb{P}^{N-1}$ of the map $\phi:(k^*)^2\rightarrow \mathbb{P}^{N-1}$ given by $\phi(x,y)=(x^iy^j)_{(i,j)\in P}$.
\end{definition}
\begin{example}
Let $\Delta$ be the lattice triangle with vertices at $(0,0)$, $(1,0)$, and $(1,2)$, which thus also contains the interior point $(1,1)$. The toric surface $X_\Delta$ is then the closure of the image of $(k^*)^2$ under the map $(x,y)\mapsto (1:x:xy:xy^2)$, which is the surface in $\mathbb{P}^3$ defined by $x_2^2-x_1x_3=0$. 
\end{example}

Let $f=\sum_{i,j}a_{i,j}x^iy^j\in k[x^{\pm 1},y^{\pm 1}]$ be a nonzero Laurent polynomial, and let $V(f)$ be the curve of its zeros in $(k^*)^2$. The \emph{Newton polygon} of $f$ is defined as the convex hull in $\mathbb{R}^2$ of the points $(i,j)\in \mathbb{Z}^2$ with $a_{i,j}\neq 0$.

An important use of the Newton polygon of a curve is in calculating the genus of a smooth curve. Indeed, the following result holds.
\begin{theorem}[\cite{K77}, Section 4, Assertion 2]\label{Khovanskii}
Let $f\in k[x^{\pm 1},y^{\pm1}]$ be a Laurent polynomial with Newton polygon $\Delta$. Let $C$ be the closure of $V(f)$ in $X_\Delta$. If $C$ is smooth, then the genus of $C$ is the number of interior points of $\Delta$.
\end{theorem}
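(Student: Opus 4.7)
The plan is to apply the adjunction formula on a smooth toric resolution of $X_\Delta$, interpret the intersection numbers appearing there in terms of lattice data of $\Delta$, and conclude via Pick's theorem.

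First I would choose a smooth projective toric surface $\pi:\tilde X\to X_\Delta$ refining the normal fan of $\Delta$; such resolutions exist for all toric surfaces because the singularities are isolated cyclic quotient singularities resolvable by subdividing the fan. Let $D_1,\dots,D_n$ be the torus-invariant prime divisors of $\tilde X$ with primitive ray generators $u_1,\dots,u_n$. Because $\pi$ is an isomorphism over the torus $(k^*)^2$, the strict transform $\tilde C\subset\tilde X$ of $C$ agrees with $V(f)$ on the torus and is birational to $C$. Since $C$ is assumed smooth and proper, a local check at the boundary divisors of $\tilde X$ identifies $\tilde C$ with $C$ as an abstract curve.

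Next I would express the divisor class of $\tilde C$ and the canonical class of $\tilde X$ in polytope language. The curve $\tilde C$ is linearly equivalent to $\sum_i a_iD_i$ with $a_i=-\min_{v\in\Delta}\langle v,u_i\rangle$, so that $\Delta$ is exactly the polytope defining the divisor class of $\tilde C$, and $K_{\tilde X}=-\sum_iD_i$ by the standard description of the canonical class on a smooth toric surface. The torus-invariant intersection numbers then give $\tilde C\cdot D_i=\ell(e_i)$, where $\ell(e_i)$ denotes the lattice length of the edge $e_i=\{v\in\Delta:\langle v,u_i\rangle=-a_i\}$ (with $\ell(e_i)=0$ if that face is a vertex). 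Summing over $i$ produces
\begin{equation*}
\tilde C\cdot K_{\tilde X}=-\sum_i\ell(e_i)=-E,
\end{equation*}
where $E$ counts the boundary lattice points of $\Delta$. The self-intersection is
\begin{equation*}
\tilde C^2=2\operatorname{Area}(\Delta),
\end{equation*}
which is the two-dimensional case of the identity $L_\Delta^n=n!\operatorname{Vol}(\Delta)$ on a smooth toric variety.

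Finally, applying the adjunction formula on $\tilde X$ gives
\begin{equation*}
2g-2=\tilde C^2+\tilde C\cdot K_{\tilde X}=2\operatorname{Area}(\Delta)-E,
\end{equation*}
and Pick's theorem (Theorem \ref{Pick's}) rewrites $2\operatorname{Area}(\Delta)=2I+E-2$, so $2g-2=2I-2$ and $g=I$ as claimed.

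The main obstacle is the passage from $X_\Delta$ to the smooth resolution. One must verify that when $C$ meets a singular point of $X_\Delta$ while remaining smooth as an abstract curve, the strict transform $\tilde C$ is also smooth and isomorphic to $C$, so that its genus agrees with $g$; this requires local analysis in the coordinate charts of the cyclic quotient singularities. Once this is handled, the combinatorial identities for intersection numbers are standard toric facts, and Pick's theorem closes the argument cleanly.
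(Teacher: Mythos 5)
The paper does not prove this statement at all: it is imported verbatim from Khovanskii's paper \cite{K77} as a black box, so there is no internal argument to compare yours against. Your adjunction computation is the standard modern proof of this fact, and the combinatorial bookkeeping is right: $\tilde C\cdot K_{\tilde X}=-E$ because the only rays contributing a nonzero lattice length are those normal to actual edges of $\Delta$, $\tilde C^2=2\operatorname{Area}(\Delta)$, and Pick's theorem turns $2\operatorname{Area}(\Delta)-E$ into $2I-2$.

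The one issue you flag --- what happens if $C$ passes through a singular point of $X_\Delta$ --- in fact never arises, and you can close that gap with one observation rather than a local analysis of quotient singularities. The singular points of $X_\Delta$ are among the torus-fixed points, which correspond to the vertices $v$ of $\Delta$. In the affine chart at such a fixed point the section of $L_\Delta$ cut out by $f$ is $x^{-v}f=\sum_u a_u x^{u-v}$, whose value at the fixed point is the vertex coefficient $a_v$; and $a_v\neq 0$ precisely because $\Delta$ is the Newton polygon of $f$, so every vertex of $\Delta$ carries a nonzero coefficient. Hence $C$ avoids all torus-fixed points, $\pi$ is an isomorphism over a neighborhood of $C$, the strict and total transforms coincide (so $\tilde C$ really does lie in $|\pi^*L_\Delta|=|\sum_i a_iD_i|$, which is what your intersection-number computation actually needs), and $\tilde C\cong C$ is smooth, so the arithmetic genus computed by adjunction is the genus of $C$. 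With that observation inserted, your proof is complete; it is also worth knowing that Khovanskii's original route is cohomological (identifying $h^1(\mathcal{O}_C)$ with $h^2(X_\Delta,L_\Delta^{-1})$, which counts interior lattice points), which generalizes more readily to higher dimensions than the adjunction argument.
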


With the aid of this theorem, we can  give a proof of Theorem \ref{g<7} conditional on Theorem \ref{classification} now.
\begin{proof}[Proof of Theorem \ref{g<7}]
Let $f\in k[x^{\pm1},y^{\pm 1}]$ be a Laurent polynomial in two variables, let $\Delta$ be the Newton polygon of $f$, and let $C$ be the closure of $V(f)$ in $X_\Delta$. Suppose $C$ is smooth, and let $g$ be the genus of $C$ 
By Theorem \ref{Khovanskii}, we have that the interior polygon $\Delta^{(1)}$ of the Newton polygon $\Delta$ associated to $C$ contains $g$ points. But by Theorem \ref{classification} we also have that $\Delta^{(1)}$ is equivalent under some lattice transformation to one of the polygons in Figure \ref{g<7figs}, and every such lattice polygon contains at most six points, so we have $g\leq 6$. This proves the first half of the theorem.

Now suppose $C$ is a curve having no rank 1 divisors with negative Brill-Noether number. As before, by Theorem \ref{Khovanskii} we have that the interior polygon $\Delta^{(1)}$ of the Newton polygon associated to to $C$ has $g$ points, and by Theorem \ref{classification} $\Delta^{(1)}$ is equivalent to one of the lattice polygons in Figure \ref{g<7figs}, Figure \ref{g=10}, or Figure \ref{3sigma}. Since those polygons all have either at most six or ten points, we hence have $g\leq 6$ or $g=10$, as was to be shown.
\end{proof}

\section{Constructing special divisors}
In this section we prove one half of Theorem \ref{classification}; we show that if a Laurent polynomial $f$ in two variables with Newton polygon $\Delta$ describes a smooth curve $C$ having no divisors with negative Brill-Noether number, then $\Delta$ is equivalent under a lattice transformation to a polygon in Figure \ref{g<7figs}. To do so, we will bound the \emph{gonality} of these curves.
\begin{definition}
The \emph{gonality} of a curve $C$ is the minimal degree of a nonconstant rational map $C\rightarrow \mathbb{P}^1$.
\end{definition}
\begin{remark}
Because the zero-set of such a rational map is a divisor of rank 1, we can equivalently think of the gonality of $C$ as the minimal degree of a rank 1 divisor on $C$.  
\end{remark}
Using the structure of the Newton polygon we can bound the gonality of the curves we are investigating. For the rest of the section, let $f$ be a Laurent polynomial in two variables with Newton polygon $\Delta$, $V(f)$ the zero set of $f$ in $(k^*)^2$, and $C$ the closure of the image of $V(f)$ under the embedding $(k^*)^2\rightarrow X_\Delta$ of the torus in the toric surface associated to $\Delta$. 
\begin{lem}\label{lwgon}
The gonality of $C$ is at most the lattice width of $\Delta$.
\end{lem}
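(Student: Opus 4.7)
The plan is to produce a rational map $C \to \mathbb{P}^1$ of degree at most $w := \mathrm{lw}(\Delta)$ from a toric coordinate projection, after first normalizing $\Delta$ by a lattice transformation. Such a map witnesses a rank $1$ divisor of degree at most $w$ on $C$, forcing its gonality to be at most $w$.

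The first step is a reduction: both the gonality of $C$ and the lattice width of $\Delta$ are invariant under the action of $\mathrm{SL}_2(\mathbb{Z})$ on $\mathbb{Z}^2$. A lattice transformation $T$ corresponds to a monomial change of coordinates on $(k^*)^2$, which extends to an isomorphism $X_\Delta \xrightarrow{\sim} X_{T(\Delta)}$ identifying $C$ with the curve cut out by the substituted Laurent polynomial, whose Newton polygon is $T(\Delta)$. Translation of the polygon amounts to multiplying $f$ by a monomial, which does not affect $V(f)$ in the torus. Any primitive linear functional $(i,j) \mapsto ai + bj$ achieving the lattice width extends to a unimodular matrix, so after a suitable lattice transformation I may assume $\Delta \subset \mathbb{R} \times [0, w]$.

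With this normalization in place, I would consider the map $\pi : C \to \mathbb{P}^1$ induced by the projection $(x,y) \mapsto x$ on $(k^*)^2$. For a generic $x_0 \in k^*$, the preimage in $V(f)$ consists of the $y \in k^*$ satisfying $f(x_0, y) = 0$. Because $\Delta$ lies in $\mathbb{R} \times [0, w]$, the expression $f(x_0, y)$ is, up to a monomial factor in $y$, a polynomial in $y$ of degree at most $w$, and thus has at most $w$ roots. For generic $x_0$ the fiber of the extended map $\pi : C \to \mathbb{P}^1$ agrees with this torus-level fiber, giving $\deg \pi \leq w$ and hence the claimed bound on gonality.

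The main obstacle is the bookkeeping in the reduction step: one must verify that the $\mathrm{SL}_2(\mathbb{Z})$-action really does identify the pair $(X_\Delta, C)$ with $(X_{T(\Delta)}, C')$ as claimed, and that generic $x_0 \in k^*$ avoids both the finitely many boundary strata of $X_\Delta$ and the locus where the fiber could be degenerate. Once these technicalities are handled, the degree count falls out immediately from the range of $y$-exponents appearing in $f$.
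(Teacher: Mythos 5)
Your proposal is correct and follows essentially the same route as the paper: normalize $\Delta$ by a lattice transformation so the width is realized in a coordinate direction, project onto the other toric coordinate, and bound the degree of the resulting map to $\mathbb{P}^1$ by $w$ (the paper computes this degree as the degree of $K(V(f))$ over $k(y)$ via the $x$-degree of $f$, while you count generic fibers -- an equivalent bookkeeping). No essential difference in approach.
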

\begin{proof}
Suppose $\Delta$ has lattice width $w$ that without loss of generality is realized by the linear map $T:\mathbb{Z}^2\rightarrow \mathbb{Z}$ that sends $(u,v)$ to $u$. Then the map $\phi:V(f)\rightarrow \mathbb{P}^1$ given by $\phi:(x,y)\mapsto y$ has degree
 $w$, for $K(V(f))=\mathrm{Frac}(k[x,y]/(f))$ is generated over $k(y)$ by $x$, and $f$ can hence be expressed in the form $x^d f_d(y)+\ldots+x^{d+w}f_{d+w}(y)$for some integer $d$ and some Laurent polynomials $f_i$ in $y$. As such, dividing out $d$ factors of $x$ gives us a degree $w$ polynomial for $K(V(f))$ over $k(y)$, so the map $\phi$ has degree $w$ and $V(f)$ has gonality at most $w$. As such, since $C$ is birationally equivalent to $V(f)$, we have that $C$ has gonality at most $w$, as was to be shown.
\end{proof}
 We now use this fact and the properties of lattice polygons given in the previous section to prove a useful bound on the gonality.
\begin{theorem}\label{bound}
Suppose $C$ is smooth and has genus $g\geq 3$. Then the gonality of $C$ is at most $\sqrt{\frac{8}{3}(g-\frac{5}{2})}+2$.
\end{theorem}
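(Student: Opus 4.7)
The plan is to combine Lemma~\ref{lwgon} with the interior polygon identity of Theorem~\ref{intlem}, the area lower bound of Theorem~\ref{blackbox}, Pick's theorem, and Theorem~\ref{Khovanskii}. Theorem~\ref{Khovanskii} identifies the lattice points of $\Delta^{(1)}$ with the $g$ interior lattice points of $\Delta$. Assuming for now that $\Delta^{(1)}$ is two-dimensional and writing $E'$, $I'$ for its numbers of boundary and interior lattice points, Pick's theorem yields
\[
\operatorname{area}(\Delta^{(1)}) = \frac{E'}{2} + I' - 1 = g - \frac{E'}{2} - 1.
\]
Since any two-dimensional convex lattice polygon has $E' \geq 3$, this area is at most $g - \tfrac{5}{2}$. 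Combining with Theorem~\ref{blackbox} then gives
\[
\operatorname{lw}(\Delta^{(1)})^2 \leq \tfrac{8}{3}\operatorname{area}(\Delta^{(1)}) \leq \tfrac{8}{3}\!\left(g - \tfrac{5}{2}\right).
\]

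Next I would split on the dichotomy of Theorem~\ref{intlem}. In the generic case $\operatorname{lw}(\Delta) = \operatorname{lw}(\Delta^{(1)}) + 2$, and Lemma~\ref{lwgon} delivers the claimed bound at once. The main obstacle is the triangle exception: when $\Delta$ is lattice-equivalent to the convex hull of $(0,0)$, $(d,0)$, $(0,d)$, one has $\operatorname{lw}(\Delta) = \operatorname{lw}(\Delta^{(1)}) + 3 = d$, which is one too large. To recover the bound in that situation I would use the geometric fact that $X_\Delta \cong \mathbb{P}^2$ and $C$ is a smooth plane curve of degree $d$, so projection from any point of $C$ gives a rational map $C \to \mathbb{P}^1$ of degree $d - 1$, whence
\[
\operatorname{gon}(C) \leq d - 1 = \operatorname{lw}(\Delta^{(1)}) + 2 \leq \sqrt{\tfrac{8}{3}\!\left(g - \tfrac{5}{2}\right)} + 2.
\]

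Finally, the case that $\Delta^{(1)}$ has dimension at most one must be dispatched. Here $\operatorname{lw}(\Delta^{(1)}) = 0$, and the triangle exception cannot occur (its interior $\Delta_{d-3}$ is two-dimensional as soon as $g = (d-1)(d-2)/2 \geq 3$), so Theorem~\ref{intlem} gives $\operatorname{lw}(\Delta) = 2$, which is safely below $\sqrt{\tfrac{8}{3}(g - 5/2)} + 2$ whenever $g \geq 3$. I expect the triangle exception to be the only real point of friction, since it is the one place where the lattice-width argument is one short and needs the extra geometric input that a smooth plane curve of degree $d$ has gonality at most $d - 1$.
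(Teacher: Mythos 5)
Your proposal is correct and follows essentially the same route as the paper: bound gonality by lattice width (Lemma~\ref{lwgon}), pass to $\Delta^{(1)}$ via Theorem~\ref{intlem}, and combine Pick's theorem with the area bound of Theorem~\ref{blackbox}, with the triangle exception handled through the plane-curve gonality $d-1$. The only (harmless) difference is cosmetic: in the exceptional case the paper verifies $d-1\leq\sqrt{\tfrac{8}{3}\bigl(\tfrac{(d-1)(d-2)}{2}-\tfrac{5}{2}\bigr)}+2$ by direct computation, whereas you reuse the inequality $\mathrm{lw}(\Delta^{(1)})\leq\sqrt{\tfrac{8}{3}(g-\tfrac{5}{2})}$ together with $d-1=\mathrm{lw}(\Delta^{(1)})+2$, which is a slightly cleaner way to close that case.
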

\begin{proof}
Suppose first that $\Delta$ is not equivalent under lattice transformation to the convex hull of $(0,0)$, $(0,d)$ and $(d,0)$ for any $d$. Let $G$ be the gonality of $C$. We have by Lemma \ref{lwgon} that $G$ is at most the lattice width of $\Delta$ and hence (noting that we have excluded the exceptional case) Theorem \ref{intlem} implies that $G \leq \mathrm{lw}(\Delta^{(1)})+2$. Now, there are precisely $g$ lattice points within or on the edges of  $\Delta^{(1)}$ . We note that at most $g-3$ of those points can be interior for any $g\geq 3$, for if $\Delta^{(1)}$ is two-dimensional, it has at least three vertices, and if it is one-dimensional, it has zero interior vertices. As such, assuming $g \geq 3$, the area of $\Delta^{(1)}$ is at most $g-\frac{5}{2}$ by Pick's theorem. Therefore, using the lower bound for area provided by Theorem \ref{blackbox} we see that 
\begin{equation}
\frac{3}{8}\mathrm{lw}(\Delta^{(1)}(f))^{2}\leq g-\frac{5}{2}.
\end{equation}
As such, since $G\leq \mathrm{lw}(\Delta(f))\leq\mathrm{lw}(\Delta^{(1)}(f))+2$, we have:
\begin{equation}
\frac{3}{8}(G-2)^{2}\leq g-\frac{5}{2}.
\end{equation}
From this we conclude $G\leq \sqrt{\frac{8}{3}(g-\frac{5}{2})}+2$, as was to be shown.

Now, if $\Delta$ is equivalent to the convex hull of $(0,0)$, $(0,d)$, and $(d,0)$, then $X_\Delta$ is the projective plane and $C$ is a smooth plane curve of degree $d$. So $C$ has gonality $d-1$ and genus $g=\frac{(d-1)(d-2)}{2}$. Since $g\geq 3$, we hence have $d\geq 4$, and by direct calculation we have that $d-1\leq  \sqrt{\frac{8}{3}(\frac{(d-1)(d-2)}{2}-\frac{5}{2})}+2$, proving the result for all $C$.
\end{proof}

\begin{cor}\label{largeg}
If $C$ is smooth and has genus $g>12$, then $C$ has a divisor of rank 1 and with negative Brill-Noether number.
\end{cor}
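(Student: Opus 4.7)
The plan is to combine the gonality bound from Theorem \ref{bound} with the observation that a rank $1$ divisor realizing the gonality has small enough degree to give a negative Brill-Noether number when $g$ is sufficiently large. Recall that for a divisor of rank $1$ and degree $d$, the Brill-Noether number is
\[
\rho(g,d,1) = g - 2(g - d + 1) = 2d - g - 2,
\]
so $\rho < 0$ precisely when $2d < g + 2$. By definition of gonality, if $G$ is the gonality of $C$ then there exists a rank $1$ divisor on $C$ of degree $G$; so it suffices to show that $2G < g + 2$ whenever $g > 12$.

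By Theorem \ref{bound} (applicable since $g > 12 \geq 3$), we have $G \leq \sqrt{\tfrac{8}{3}(g - \tfrac{5}{2})} + 2$. So the question reduces to the elementary inequality
\[
\sqrt{\tfrac{8}{3}\bigl(g - \tfrac{5}{2}\bigr)} + 2 < \tfrac{g+2}{2},
\]
which, after subtracting $2$ and squaring (both sides being positive for $g \geq 6$), is equivalent to $3g^2 - 44g + 92 > 0$. The roots of this quadratic are approximately $2.52$ and $12.14$, so the inequality holds for every integer $g \geq 13$. Hence for such $g$ we have $G < (g+2)/2$, and since $G$ is an integer we conclude $2G \leq g + 1 < g + 2$, so the rank $1$ divisor of degree $G$ realizing the gonality has $\rho = 2G - g - 2 \leq -1 < 0$.

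There isn't really a serious obstacle here; the argument is essentially a routine numerical verification glued onto Theorem \ref{bound}. The only point requiring minor care is the translation from the inequality $G \leq \sqrt{\tfrac{8}{3}(g - \tfrac{5}{2})} + 2$ (which a priori leaves open the borderline $2G = g+2$) to the strict inequality $\rho < 0$; the integrality of $G$ and the strict inequality $3g^2 - 44g + 92 > 0$ at $g = 13$ together close this gap, and no additional geometric input is needed.
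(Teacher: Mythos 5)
Your proof is correct and is essentially the paper's argument: both combine the gonality bound of Theorem \ref{bound} with the computation $\rho(g,d,1)=2d-g-2$ and a routine check that the resulting inequality fails (holds) precisely for $g>12$. The only difference is presentational --- the paper argues contrapositively, deducing gonality at least $\lceil g/2\rceil+1$ from the absence of such divisors and contradicting Theorem \ref{bound}, whereas you apply the bound directly to the gonality pencil; the content is the same.
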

\begin{proof}
Let $C$ be a smooth curve on which no rank 1 divisor has negative Brill-Noether number. As such, it has no divisors of rank 1 and degree $\lceil \frac{g}{2}\rceil$, and hence has gonality at least $\lceil\frac{g}{2}\rceil +1$. But then, from Theorem \ref{bound} we can conclude $\lceil\frac{g}{2}\rceil+1\leq \sqrt{\frac{8}{3}(g-\frac{5}{2})}+2$. This inequality is false when $g>12$, establishing the result.
\end{proof}
With this result much of the work of the classification is done; we know we can restrict our attention to those lattice polygons containing twelve or fewer interior points. When the lattice polygon also does not contain ten interior points, this is particularly simple.
\begin{proposition}\label{smallg}
The lattice width of a convex lattice polygon with $g$ interior points, where $7\leq g\leq 12$ and $g\neq 10$, is at most $\lceil\frac{g}{2}\rceil$.
\end{proposition}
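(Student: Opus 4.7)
The plan is to reduce via Theorem \ref{intlem} to a bound on the lattice width of the interior polygon $P^{(1)}$, and then combine Pick's theorem with the area estimate of Theorem \ref{blackbox}. The exceptional case of Theorem \ref{intlem} is automatically avoided, since the lattice triangle with vertices $(0,0),(d,0),(0,d)$ has $\binom{d-1}{2}$ interior lattice points, which takes values $1,3,6,10,15,\ldots$ and misses every $g \in \{7,8,9,11,12\}$. Hence $\mathrm{lw}(P) = \mathrm{lw}(P^{(1)}) + 2$ throughout. If $P^{(1)}$ is at most one-dimensional then $\mathrm{lw}(P^{(1)}) = 0$ and the conclusion $\mathrm{lw}(P) = 2 \leq \lceil g/2 \rceil$ is immediate, so I may assume $P^{(1)}$ is two-dimensional and hence has at least three vertices.

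Let $I'$ be the number of interior lattice points of $P^{(1)}$ and $B' = g - I'$ its boundary count; since $B' \geq 3$, Pick's theorem gives $\mathrm{Area}(P^{(1)}) = (g + I')/2 - 1 \leq g - 5/2$. Combining with Theorem \ref{blackbox} yields $\mathrm{lw}(P^{(1)})^2 \leq \tfrac{8}{3}(g - \tfrac{5}{2})$, so $\mathrm{lw}(P^{(1)}) \leq 3, 3, 4, 4, 5$ for $g = 7,8,9,11,12$ respectively. For $g = 11$ this already gives $\mathrm{lw}(P) \leq 6 = \lceil g/2 \rceil$, settling that genus.

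For the remaining four cases the generic estimate falls short by exactly one, and I would rule out the extremal width directly. The inequalities $\tfrac{3}{8}(w')^2 \leq \mathrm{Area}(P^{(1)}) \leq g - \tfrac{5}{2}$ confine $\mathrm{Area}(P^{(1)})$, a half-integer, to a very narrow window; the Pick identity $\mathrm{Area}(P^{(1)}) = (g+I')/2 - 1$ then forces only a handful of possibilities for the triple $(B', I', \mathrm{Area})$. For example, when $g = 12$ and $w' = 5$ one is pushed into $\mathrm{Area} = 19/2$, $B' = 3$, $I' = 9$, so $P^{(1)}$ must be a triangle with no extra lattice points on its edges and nine interior lattice points. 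I would then enumerate the lattice-equivalence classes of polygons satisfying each such constraint, using classifications of convex lattice polygons with few interior lattice points as in \cite{CC12} or direct edge-vector analysis for the triangle cases, and compute the lattice width of each to verify that the extremal value is never attained.

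The main obstacle is precisely this final enumeration. Theorems \ref{intlem} and \ref{blackbox} together are sharp to within one in four of the five genera, so the borderline configurations cannot be ruled out by any purely numerical estimate; one is forced into a hand analysis of a short, explicit list of candidate polygons, with the $g=12$ extremal triangle the most delicate instance to check.
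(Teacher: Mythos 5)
Your reduction via Theorem \ref{intlem} (the exceptional triangle is correctly excluded since $\binom{d-1}{2}$ never equals $7,8,9,11,12$) and the Pick--plus--Theorem \ref{blackbox} estimate are fine, and they do settle $g=11$. But the proof is incomplete for $g=7,8,9,12$, and the completion you sketch has a genuine flaw, not just a missing computation. You propose to list all convex lattice polygons $Q$ with $g$ lattice points whose $(B',I',\mathrm{Area})$ data are compatible with the extremal width $w'$, and to "verify that the extremal value is never attained." It is attained. For $g=7$ you must rule out $\mathrm{lw}(P^{(1)})=3$, and your window allows $(B',I',\mathrm{Area})=(4,3,4)$; the triangle with vertices $(0,0)$, $(3,1)$, $(1,3)$ has exactly these data (seven lattice points, area $4$) and lattice width $3$ (check the directions $(1,0),(0,1),(1,\pm1)$; all others are worse). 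So an enumeration over polygons with $g$ lattice points, which is what you describe, produces a counterexample to the statement you intend to verify, and the argument stops there.

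The point your plan misses is that $P^{(1)}$ is not an arbitrary polygon with $g$ lattice points: it must be realizable as the \emph{interior} polygon of some lattice polygon with $g$ interior points. The triangle above is not -- moving each edge outward by lattice distance one yields a region whose vertices (e.g.\ the intersection of $x-3y=1$ with $3x-y=-1$ at $(-\tfrac12,-\tfrac12)$) are not lattice points, so no parent polygon exists. Any correct version of your enumeration therefore has to impose the Castryck--Cools realizability criterion from \cite{CC12} on each candidate, and then redo the case analysis for $g=7,8,9,12$; this is substantial extra work that you have not carried out, and whether the $g=12$ extremal triangle with $(B',I',\mathrm{Area})=(3,9,\tfrac{19}{2})$ and width $5$ can be excluded is left entirely open in your write-up. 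The paper avoids all of this by quoting the exhaustive classification data directly: Table 1 of \cite{C12} gives the maximum lattice width of a convex lattice polygon with $7,8,9,11,12$ interior points as $4,4,5,5,6$, which is at most $\lceil g/2\rceil$ in each case. Your route could in principle be made to work, but only with the realizability constraint built in, at which point one is essentially reproducing the classification being cited.
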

\begin{proof}
When a convex lattice polygon $\Delta$ has 7, 8, 9 , 11, or 12 interior points, Table 1 of \cite{C12}--derived from exhaustively indexing the lattice polygons with $g$ interior points--indicates that $\Delta$ has maximum lattice width 4, 4, 5, 5, or 6 respectively; from this it follows that the lattice width is at most $\lceil\frac{g}{2}\rceil$ when $g$ is one of these values.
\end{proof}

As such, the only lattice polygons that can have an associated curve with no divisors having negative Brill-Noether number are those with ten or fewer than seven interior points. We now show that there is no such curve associated with a lattice polygon with ten interior points.

\begin{proposition}\label{10}
If $C$ is smooth and has genus 10, then there exists a divisor on $C$ with negative Brill-Noether number.
\end{proposition}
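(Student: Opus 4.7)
The plan is to split the proof according to the lattice width of the Newton polygon $\Delta$. Tracing through the proof of Theorem \ref{bound} for $g=10$ yields $\mathrm{lw}(\Delta^{(1)})^2 \leq \frac{8}{3}\cdot\frac{15}{2} = 20$, so $\mathrm{lw}(\Delta^{(1)})\leq 4$; combining this with Theorem \ref{intlem} (and noting that the simplex case for $g=10$ necessarily has $\mathrm{lw}(\Delta^{(1)})=3$, since only the sixth dilate of the standard simplex has exactly 10 interior lattice points) gives $\mathrm{lw}(\Delta)\leq 6$. If $\mathrm{lw}(\Delta)\leq 5$, then by Lemma \ref{lwgon} there is a pencil on $C$ of degree at most 5, hence a rank 1 divisor with $\rho(10,5,1) = -2 < 0$, and we are done.

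So assume $\mathrm{lw}(\Delta)=6$. Theorem \ref{intlem} leaves two sub-cases: either $\mathrm{lw}(\Delta^{(1)}) = 4$, or $\Delta$ is lattice-equivalent to a dilation of the standard simplex with $\mathrm{lw}(\Delta^{(1)}) = 3$. In the simplex case the condition $\#(\Delta^{(1)}\cap\mathbb{Z}^2)=g=10$ (Theorem \ref{Khovanskii}) forces $\Delta$ to be lattice-equivalent to the triangle with vertices $(0,0)$, $(6,0)$, $(0,6)$, so $C$ is a smooth plane sextic; projection from any point of $C$ then gives a pencil of degree $5$, yielding a rank 1 divisor with $\rho=-2<0$.

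We are therefore reduced to the case where $\Delta^{(1)}$ has 10 lattice points and lattice width exactly 4. The Castryck enumeration \cite{C12} identifies the finitely many such polygons up to lattice equivalence; these are the polygons displayed in Figure \ref{g=10}. For each such $\Delta^{(1)}$, the plan is to exhibit an explicit divisor of rank $2$ and degree at most $8$ on $C$, which automatically has $\rho(10,d,2) = 3d-26 \leq -2 < 0$. Under the identification of $H^0(C,K_C)$ with the interior lattice points of $\Delta$, such a divisor arises naturally from a sublinear system of the canonical series spanned by three lattice points of $\Delta^{(1)}$ lying on a common line: the corresponding rational map from $C$ to $\mathbb{P}^2$ has general fiber whose degree can be bounded combinatorially from $\Delta^{(1)}$. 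The main obstacle is verifying, for each of the exceptional polygons in Figure \ref{g=10}, that this construction yields a series of rank exactly $2$ (i.e., free of unexpected base locus) and of degree at most $8$.
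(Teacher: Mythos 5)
Your reduction is fine up to the last case, and it parallels the paper: lattice width at most $5$ gives a $g^1_5$ with $\rho(10,5,1)=-2$, and your explicit treatment of the plane sextic $\Delta\cong 6\Sigma$ (projection from a point of $C$) is correct. The genuine gap is the exceptional case, the single interior polygon of Figure \ref{g=10} with lattice width $4$: there you only sketch a construction and yourself flag its key verification as unresolved, and in fact the construction as described cannot work. A subsystem of the canonical series spanned by the monomials at three \emph{collinear} lattice points of $\Delta^{(1)}$ is compounded with a pencil: writing the three points as $m$, $m+w$, $m+2w$ with $w$ primitive, the induced map $C\to\mathbb{P}^2$ is $p\mapsto(1:t(p):t(p)^2)$ with $t=x^{u}y^{v}$ the monomial attached to $w$, so its image is a conic and the moving part of the system is twice a fiber class of $t$. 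The degree of $t$ on $C$ is a width of $\Delta$ in the corresponding direction, hence at least $\mathrm{lw}(\Delta)=6$ in exactly the case you are trying to treat; the divisor you produce therefore has rank $2$ but degree at least $12$, and $\rho(10,12,2)=10\geq 0$. (If the pencil had degree at most $5$ you would already be done by the rank-$1$ argument, so this construction never adds anything.)

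What is missing is the input the paper uses to close this case: by the proof of Theorem 9 of \cite{CC12}, a smooth curve with this Newton polygon is the intersection of two cubic surfaces in $\mathbb{P}^3$. The hyperplane class is then a divisor of degree $9$ and rank $3$ with $\rho(10,9,3)=10-4\cdot4=-6<0$, which already proves the proposition as stated (no rank restriction); the paper packages this via the Clifford index and the recognition theorem of \cite{ELMS89}, since such a curve has Clifford dimension $3$ and gonality $6$, so no rank-$1$ witness exists. If you insist on a rank-$2$ divisor of degree at most $8$, one does exist on these curves---project the degree-$9$ space curve from one of its own points to obtain a $g^2_8$, with $\rho(10,8,2)=-2$---but its existence comes from the $\mathbb{P}^3$ model, not from collinear interior lattice points, so your proposal needs this geometric identification (or some substitute) to be completed.
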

\begin{proof}
We have that the Newton polygon $\Delta$ associated to $C$ will have ten interior points. As indicated in section 5.9 of \cite{CC13}, a convex lattice polygon with 10 interior points has (up to lattice transformations) 22 possible interior lattice polygons. With the exception of the polygon displayed in Figure \ref{g=10}, all have lattice width at most 3, and none is equivalent to a lattice polygon that is the convex hull of the points $(0,0)$, $(0,d)$, and $(d,0)$ for some $d$, so by Theorem \ref{intlem} unless $\Delta^{(1)}$ is the polygon indicated in Figure \ref{g=10} we have that $\Delta$ has lattice width at most 5, so our associated curve $C$ will have gonality at most 5 by Lemma \ref{lwgon} and therefore some divisor of rank one having a negative Brill-Noether number. 

And if $\Delta^{(1)}$ is equivalent under some lattice transformation to the polygon in Figure \ref{g=10}, then without loss of generality that the Newton polygon of our $f$ is in fact that of Figure \ref{g=10}, so from the proof of Theorem 9 of \cite{CC12} we have that $C$ is the smooth intersection of two cubic surfaces in $\mathbb{P}^3$, namely $X_\Delta$ and a cubic surface described by $f$. Then $C$ will have gonality 6 generally, implying we cannot find a divisor of rank 1 with negative Brill-Noether number. Suppose that we have such a curve with no divisors of rank 1 having negative Brill-Noether number. By the Recognition Theorem of \cite{ELMS89}, such a curve will have a \emph{Clifford dimension} of 3, where the Clifford dimension is the minimal value of $r$ such that some divisor of rank $r$ has a degree $d$ that achieves the minimum value of $d-2r$ over all divisors on $C$, the so-called \emph{Clifford index}.  Therefore, we have that its Clifford index is at most 3, since otherwise some divisor of rank 1 would achieve it.  So there is some rank-3 divisor of degree at most 9 achieving the Clifford index, and such a divisor has negative Brill-Noether number. So every curve $C$ associated with $\Delta$ has a divisor with negative Brill-Noether number.
\end{proof}

\section{Curves of genus at most 6}
In this section we establish Theorem \ref{classification}; the partial results of Corollary \ref{largeg} and Propositions \ref{smallg} and \ref{10} together establish that no exceptional interior polygon has more than six points, so this process reduces to examining the interior polygons having at most six points.

First, we note that Figure \ref{g<7figs} lists up to lattice transformation every interior polygon having six or fewer vertices whose lattice width does not immediately imply the existence of divisors with negative Brill-Noether number in an associated curve, with the exception of the interior polygon in Figure \ref{3sigma}, which has associated curves that are plane curves of degree five and hence of Clifford index 1 and Clifford dimension 2. Therefore, any curve having Figure \ref{3sigma} as an interior polygon will have some rank 2 degree 5 divisor, and hence will have some divisor with negative Brill-Noether number.

As such, to establish Theorem \ref{classification} we now need only show that the 11 polygons in figure \ref{g<7figs} have associated curves that have no divisors with negative Brill-Noether number. A critical tool in our analysis will be the \emph{Riemann-Roch Theorem}, which we state here for completeness.
\begin{theorem}
If $K$ is the canonical divisor on a curve $C$ of genus $g$, and $D$ any divisor on the same curve, then we have 
\begin{equation*}
r(D)-r(K-D)=\mathrm{deg}(D)-g+1
\end{equation*}
where r(D) is the rank of $D$.
\end{theorem}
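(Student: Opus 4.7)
The plan is to prove Riemann--Roch by reducing it to the additivity of the Euler characteristic together with Serre duality. The identity becomes tractable once we reinterpret both sides cohomologically: $r(D)+1 = h^0(C, \mathcal{O}_C(D))$ from the definition of the complete linear system, and, via Serre duality, $r(K-D)+1 = h^0(C, \mathcal{O}_C(K-D)) = h^1(C, \mathcal{O}_C(D))$. Under these identifications the assertion is equivalent to the Euler-characteristic formula
\begin{equation*}
\chi(\mathcal{O}_C(D)) := h^0(C,\mathcal{O}_C(D)) - h^1(C,\mathcal{O}_C(D)) = \deg(D) + 1 - g.
\end{equation*}

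To establish this formula I would proceed by induction on the support of $D$. The base case is $\chi(\mathcal{O}_C) = 1 - g$, which follows from $h^0(\mathcal{O}_C) = 1$ for a smooth connected projective curve together with the definition $g := h^1(C, \mathcal{O}_C)$. For the inductive step, each closed point $p \in C$ yields a short exact sequence
\begin{equation*}
0 \to \mathcal{O}_C(D) \to \mathcal{O}_C(D+p) \to k(p) \to 0,
\end{equation*}
and the associated long exact sequence in cohomology, combined with $\chi(k(p)) = 1$, forces $\chi(\mathcal{O}_C(D+p)) = \chi(\mathcal{O}_C(D)) + 1$. Writing an arbitrary divisor as a $\mathbb{Z}$-linear combination of closed points and iterating in both directions yields the formula for arbitrary $D$, after which the Serre duality identification produces the stated identity.

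The main obstacle is Serre duality itself: the isomorphism $H^1(C, \mathcal{O}_C(D)) \cong H^0(C, \Omega_C^1(-D))^\vee$ is a nontrivial foundational result whose proof requires identifying the canonical sheaf as the dualizing sheaf. For a self-contained argument one could follow Weil's adèlic approach, presenting $H^1(C, \mathcal{O}_C(D))$ as a quotient of the adèle ring by principal and integral parts, and then dualizing to identify the resulting space with meromorphic differentials whose divisors dominate $-D$. In the present paper, however, Riemann--Roch is being invoked as a standard tool rather than derived, so citing the classical reference would be appropriate in lieu of reproducing this machinery.
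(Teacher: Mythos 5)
The paper itself offers no proof of this statement: Riemann--Roch is quoted there as a classical black box (``which we state here for completeness''), so there is no internal argument to compare against. Your sketch is the standard cohomological proof (as in Hartshorne IV.1.3) and is correct in outline: identify $r(D)+1=h^0(\mathcal{O}_C(D))$ (valid even when the linear system is empty, with the convention $r=-1$), use the skyscraper sequence $0\to\mathcal{O}_C(D)\to\mathcal{O}_C(D+p)\to k(p)\to 0$ to prove $\chi(\mathcal{O}_C(D))=\deg D+1-g$ by induction from $\chi(\mathcal{O}_C)=1-g$, and then apply Serre duality to rewrite $h^1(\mathcal{O}_C(D))$ as $h^0(\mathcal{O}_C(K-D))=r(K-D)+1$. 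Two small points. First, the paper works over an arbitrary field $k$, so a closed point $p$ need not be rational; then $\chi(k(p))=[k(p):k]=\deg p$ rather than $1$, and the inductive step reads $\chi(\mathcal{O}_C(D+p))=\chi(\mathcal{O}_C(D))+\deg p$, which is still exactly what the degree count requires, so the argument survives with that correction. Second, as you acknowledge, the genuine content you do not reprove is Serre duality (equivalently, that $\Omega^1_C$ is the dualizing sheaf); since the paper treats Riemann--Roch itself as a citation-level tool, deferring that foundational input to a standard reference is consistent with the paper's usage, and your proposal is appropriately honest about where the weight of the proof lies.
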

In practice, this means that given a divisor $D$ with negative Brill-Noether number, if $r(D)$ is sufficiently large then $K-D$ will be another divisor with negative Brill-Noether number but with smaller rank. We make this precise with the following proposition
\begin{prp}
If a curve of genus $g<5$ has a divisor with negative Brill-Noether number, it has a divisor $D$ with negative Brill-Noether number of rank 1. If a curve of genus 5 or 6 has a divisor with negative Brill-Noether number, it has a divisor $D$ with negative Brill-Noether number of rank 1 or 2.
\end{prp}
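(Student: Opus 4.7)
The plan is to leverage the symmetry $D \mapsto K - D$ under Serre duality, which preserves the Brill-Noether number, together with Clifford's theorem. Starting from any $D$ with $\rho(g,d,r)<0$, I will show that either $D$ itself has small rank or else its dual $K-D$ does.

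First I would observe that $D$ must be special: if $h^1(D)=0$, Riemann-Roch yields $r+1=d-g+1$, whence $\rho(g,d,r)=g\geq 0$, a contradiction. So $K-D$ is linearly equivalent to an effective divisor with rank $r'=g-d+r-1$ and degree $d'=2g-2-d$, and a direct substitution shows $\rho(g,d',r')=\rho(g,d,r)$, so $K-D$ likewise has negative Brill-Noether number. I would also note that no rank-zero divisor can have $\rho<0$: indeed $\rho(g,d,0)=d$, and any effective divisor has nonnegative degree. Hence $r(D)\geq 1$ (and similarly $r(K-D)\geq 1$) automatically.

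Now the punchline: applying Clifford's theorem to the special effective divisor $D$ gives $d \geq 2r$, so
\[
r(K-D) \;=\; g - d + r - 1 \;\leq\; g - r - 1.
\]
For $g\leq 4$, if $r\geq 2$ then $r(K-D)\leq g-3\leq 1$, forcing $r(K-D)=1$ by the preceding remark; otherwise $r=1$ and $D$ itself is the desired divisor. For $g\in\{5,6\}$, if $r\geq 3$ then $r(K-D)\leq g-4\leq 2$, so $r(K-D)\in\{1,2\}$; otherwise $r\leq 2$ already, and $D$ works.

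The main thing to notice, as opposed to a real technical obstacle, is that Clifford's theorem (usually phrased as an upper bound on $r$ by $d/2$) should be read here as a lower bound on $d$ in terms of $r$, which is precisely the ingredient that feeds through Serre duality to bound $r(K-D)$ from above. Once this observation is made, the bookkeeping in the case analysis is entirely routine.
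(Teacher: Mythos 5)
Your argument is correct, and it takes a genuinely different route from the paper's. The paper splits into cases by degree: when $\deg D$ is small it subtracts $r-2$ (or $r-1$) points to produce a divisor of rank $2$ (or $1$) and checks directly that the Brill--Noether number stays negative; when $\deg D$ is large it passes to $K-D$, which has small degree, and then feeds that back into the first case. You instead avoid point subtraction entirely: after noting that $D$ must be special (else $\rho=g\geq 0$) and that $\rho$ is preserved under $D\mapsto K-D$, you use Clifford's theorem $d\geq 2r$ to bound $r(K-D)=g-d+r-1\leq g-r-1$, which immediately lands in rank $1$ (resp.\ rank $1$ or $2$) for $g\leq 4$ (resp.\ $g\in\{5,6\}$). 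Your approach buys a uniform, degree-free argument that makes the genus dependence transparent; the paper's argument is more elementary, using only Riemann--Roch and the fact that removing a point drops the rank by at most one, and its point-subtraction step shows more generally how to trade rank for negativity of $\rho$ without invoking Clifford. One cosmetic remark: for $g\leq 3$ the subcase $r\geq 2$ actually gives $r(K-D)\leq 0$, contradicting $r(K-D)\geq 1$, so rather than ``forcing $r(K-D)=1$'' it shows that case is vacuous and $D$ itself already has rank $1$; this does not affect the validity of the proof.
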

\begin{proof}
We illustrate the proposition in the case of a curve $C$ with $g=5$; the other cases are analogous. 
Suppose $C$ has a divisor $D$ of degree $d$ with negative Brill-Noether number and rank $r>2$. If $d\leq5$, we have that subtracting some set of $r-2$ points will produce a divisor $D'$ of degree at most 4 and rank 2, so $D'$ will have Brill-Noether number at most $5-3\cdot 3=-4$. Otherwise, by the Riemann-Roch theorem, we have $K-D$ has rank $r-d+4$ and has degree $8-d$, and so has Brill-Noether number $5-(r-d+5)(1+r)$. But we observe that $D$ has this same Brill-Noether number, so $K-D$ has negative Brill-Noether number and has degree at most 3, so by our previous argument $C$ has a divisor of rank at most 2 with negative Brill-Noether number.
\end{proof}

Based on this, it suffices to check that the gonality of curves having the Newton polygons in Figure \ref{g<7figs} will be at least $\frac{g}{2}+1$, and that the minimum degree of a rank two divisor on a curve with Newton polygon among the bottom five in Figure \ref{g<7figs} is at least $\lceil\frac{2g}{3}\rceil+2=6$. 

But the gonalities for curves associated to all eleven interior polygons are already known. In particular, in the proof of Theorem 4 of \cite{CC13} Castryck and Cools determine that the gonality of a certain general family of smooth curves $C$ on the toric surface associated their Newton polygon--those which are \emph{non-degenerate with respect to their Newton polygon}, with interior Newton polygon among those in Figure \ref{g<7figs}, all have gonality $\lceil\frac{g+1}{2} \rceil$. Hence, curves are associated with all eleven interior polygons that have no rank one divisors with negative Brill-Noether number.

And any curve $C$ associated to one of the bottom five interior polygons non-degenerate with respect to its Newton polygon has Clifford index 2 by Theorem 9 of \cite{CC13}, whence any divisor on $C$ of rank 2 has degree at least six and hence has negative Brill-Noether number.

So for each of the eleven interior polygons listed in Figure \ref{g<7figs} there is a family of associated curves having no divisors having negative Brill-Noether number, completing our proof of Theorem \ref{classification}.
\section*{Acknowledgments}
I would like to thank Sam Payne for suggesting the project and for guidance throughout the process. I also would like to thank Dustin Cartwright for many helpful conversations and Dhruv Ranganathan for his comments on drafts of this paper. Finally, I am grateful to the NSF for its support.
\bibliographystyle{plain}
\bibliography{Citations}
\end{document}